\documentclass[letterpaper, 10pt, conference]{ieeeconf}
\IEEEoverridecommandlockouts
\usepackage[
    backend=biber,
    style=ieee,
    url=false, 
    doi=false,
    isbn=false,
]{biblatex}
\usepackage{amsmath,amssymb,amsfonts}

\usepackage{amsthm}
\newcommand{\R}{\mathbb{R}}
\DeclareMathOperator{\Proj}{Proj}
\DeclareMathOperator{\prox}{prox}
\newtheorem{theorem}{Theorem}
\newtheorem{lemma}{Lemma}
\newtheorem{definition}{Definition}
\newtheorem{remark}{Remark}
\usepackage{algorithmic}
\usepackage{booktabs}
\usepackage{multirow}
\usepackage{graphicx}
\usepackage{textcomp}
\usepackage[table]{xcolor}
\usepackage[hidelinks]{hyperref}

\begin{document}

\title{Efficient ADMM for Soft-Constrained Quadratic Programs\\via Generalized Slack Projections\\
\thanks{This project was supported by the Toyota Research Institute and the National Science Foundation (Awards
2411369, 2535096). Opinions, findings, conclusions, or recommendations expressed in this material are those of the authors and do not necessarily reflect those of the funding organizations.}%
\thanks{Roland Schwan and Brian Plancher are with Dartmouth College, USA. \texttt{\{roland.schwan, plancher\}@dartmouth.edu}}%
\thanks{Thomas Lew is with Toyota Research Institute, Los Altos, USA. \texttt{thomas.lew@tri.global}}%
\thanks{LLM tools, including Codex and Claude, have been used to improve the grammar and style of several paragraphs in this manuscript. All final text and references were edited and
reviewed by humans.}
}

\author{Roland Schwan, Thomas Lew, Brian Plancher}

\maketitle

\begin{abstract}
Quadratic programs (QPs) are a core building block in many applications. In practice, constraints are often softened with slack variables to guarantee feasibility. The standard approach tackles this by introducing additional decision variables, growing the resulting optimization problem. In this paper, we propose an Alternating Direction Method of Multipliers (ADMM) scheme for a broad class of convex slack penalty functions, including all $\ell_k$-norms, without increasing the problem size. The only change to the ADMM scheme is a modified slack projection function, while the remaining algorithm stays the same. We demonstrate our approach on numerical examples, showing significant speedups, and show its flexibility on an energy optimization problem.
\end{abstract}

\section{Introduction}

Numerical optimization is a crucial component in many modern engineering disciplines, with applications spanning robotics \cite{wensing2024_legged_robots}, automotive systems \cite{waibel2026_laopt}, energy systems \cite{vazquez2014_mpc_power}, and finance \cite{rubinstein2002_portfolio}. QPs in particular form a fundamental building block, arising either directly from the problem formulation or as a submodule of more general algorithms such as sequential quadratic programming \cite{nocedal2006_bible}. We consider QPs of the form
\begin{align} \tag{\textcolor{blue}{\textbf{QP}}} \label{eq:QP}
\min_{x}\ 
\tfrac{1}{2}x^\top Q x + q^\top x 
\ \ \text{s.t.}\ \, 
\ell \leq A x\leq u,
\end{align}
where $x\in\R^n$ are decision variables, $Q\in\R^{n\times n}$ is positive semidefinite, $q\in\R^n$, $A\in\R^{p\times n}$, and $\ell,u\in(\R\cup\{\pm\infty\})^p$ with $\ell\leq u$. Popular algorithms to solve such problems include interior-point methods \cite{schwan2023_piqp, goulart2026_clarabel} as well as proximal methods, e.g., ADMM \cite{boyd2011_admm, stellato2020_osqp, odonoghue2016_scs}. We focus on the latter, which is particularly popular due to its simplicity, warm-startability, and cheap iterations.

In many applications, the resulting \ref{eq:QP} might become infeasible due to model inaccuracies or sensor noise. Hence, practitioners often relax the hard constraints to guarantee that \ref{eq:QP} always admits a feasible solution. By introducing slack variables $\xi\in\R^p$, we can form the soft-constrained QP
\begin{equation} \tag{\textcolor{blue}{\textbf{QPslack}}} \label{eq:QP_slack}
\begin{aligned}
\min_{x,\xi}\quad
&\tfrac{1}{2}x^\top Qx+q^\top x+\Phi(\xi)\\
\text{s.t.}\quad
&\ell\leq Ax+\xi\leq u,
\end{aligned}
\end{equation}
where $\Phi:\R^p\to\R\cup\{+\infty\}$ is a proper, closed, convex penalty function with $\Phi(0)=0$. The choice of $\Phi$ encodes how constraint violations are traded off: quadratic penalties spread small violations over many constraints, $\ell_1$ penalties are exact if the penalty weight exceeds the dual norm of the optimal multipliers \cite{kerrigan2000_soft}, and $\ell_\infty$ penalties only charge the peak violation.

\begin{figure}[t]
\centering
\includegraphics[width=\columnwidth]{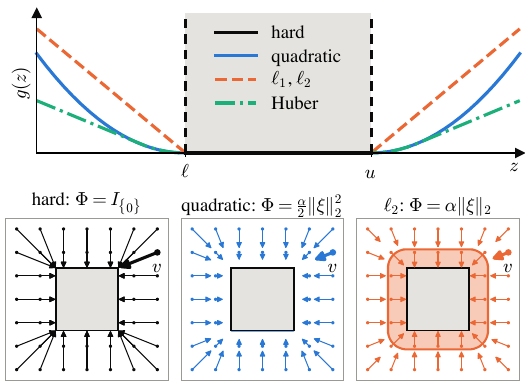}
\caption{Top: The relaxed indicator $g(z)=\inf_{\xi\in\R^p}\bigl\{\Phi(\xi)+I_{[\ell,u]}(z+\xi)\bigr\}$ replaces the hard walls of the box indicator $I_{[\ell,u]}$ with relaxed penalty functions. Bottom: The resulting generalized slack projection in two dimensions. Arrows map points $v$ to $\Proj^{\Phi}_{[\ell,u]}(v)$ \eqref{eq:slack_projection}; in the $\ell_2$ panel (right), the shaded region is mapped exactly onto the box.}
\label{fig:slack_projection}
\vspace{-5mm}
\end{figure}

However, penalties can be costly to handle with standard solvers. The standard approach to solve \ref{eq:QP_slack} is to introduce $\xi_i$ as decision variables and reformulate the problem as a standard \ref{eq:QP}, where nonsmooth penalties require additional variables and constraints \cite{boyd2004_convex}. For example, using the $\ell_1$ penalty $\Phi(\xi)=\alpha\|\xi\|_1$ with $\alpha>0$ gives the lifted problem
\begin{equation}\label{eq:l1_lifting}
\begin{aligned}
\min_{x,s}\quad
&\tfrac{1}{2}x^\top Qx+q^\top x+\alpha\mathbf{1}^\top s\\
\text{s.t.}\quad
&\ell-s\leq Ax\leq u+s,\quad s\geq0,
\end{aligned}
\end{equation}
where $s\in\R^p$ are additional decision variables. Since $s$ enters both bounds with opposite signs, each two-sided constraint is split into two one-sided constraints, and $s\geq0$ adds further constraints (see Remark~\ref{rem:lifting}). As summarized in Table~\ref{tab:lifting}, for $m$ soft rows, lifting increases the order of the KKT system solved in every ADMM iteration from $n+p$ up to $n+p+4m$ and, consequently, the solve time. For non-separable penalties such as the $\ell_\infty$ penalty, lifting can also destroy the sparsity structure needed for efficient solves of multistage problems such as model predictive control (MPC) \cite{schwan2025_multi_piqp}, since the shared slack variable couples all stages together. Additionally, penalty functions like the non-squared Euclidean norm must be reformulated as a second-order cone program, changing the problem class.

Structure-exploiting interior-point solvers such as \texttt{HPIPM} \cite{frison2020_hpipm} avoid the growth of the KKT system by eliminating the slack variables in the linear system. For ADMM, \cite{lew2026_slackqp} showed that squared two-norm penalties can be handled without lifting by modifying the projection step. Moreover, the proximal operator of the distance function to a convex set, which corresponds to the non-squared $\ell_2$ penalty, is known in closed form \cite{parikh2014_prox, beck2017_first_order}. However, to the best of our knowledge, no ADMM scheme handles general penalty functions without lifting.

\noindent \textbf{Contributions}: We extend the ADMM approach in \cite{lew2026_slackqp} from squared two-norm penalty functions to general \emph{unconditional} convex penalties, i.e., penalties invariant under coordinatewise sign changes. This class includes all $\ell_k$-norms and separable penalties with even convex components, in particular all penalties in Table~\ref{tab:lifting}. As illustrated in Fig.~\ref{fig:slack_projection}, the only change to the ADMM scheme is a generalized slack projection. This allows us to solve \ref{eq:QP_slack} \textit{without} introducing additional decision variables, resulting in faster solve times. We implement our approach as an extension of the popular open-source solver \texttt{OSQP} \cite{stellato2020_osqp} and show speedups of up to $3.4\times$ over lifting on random sparse QPs. In addition, we give an application to building energy management, motivating the use of different types of penalty functions. The code to reproduce the results is available at\\\texttt{\url{https://github.com/A2R-Lab/OSQPSlack}}.

\section{Solving \texorpdfstring{\ref{eq:QP}}{QP} with ADMM} \label{sec:QP_ADMM}

Our method builds on the ADMM splitting of \cite{stellato2020_osqp}, which we review below. For clarity, we omit details such as over-relaxation and adaptive $\rho$ updates, as including them does not impact our results and follows directly from prior work~\cite{stellato2020_osqp}.

Introducing the auxiliary decision variables $\tilde x \in \R^n$ and $z, \tilde z \in \R^p$, the \ref{eq:QP} is equivalent to
\begin{equation}\label{eq:QP_OSQP_split}
\begin{aligned}
\min_{\tilde x,\tilde z,x,z}\quad
&\tfrac{1}{2}\tilde x^\top Q\tilde x
+q^\top\tilde x
+I_{A\tilde x=\tilde z}(\tilde x,\tilde z)
+I_{[\ell,u]}(z)\\
\text{s.t.}\quad
&(\tilde x,\tilde z)=(x,z),
\end{aligned}
\end{equation}
where $I_\mathcal{C}(\cdot)$ is the indicator function
\begin{equation}\label{eq:indicator}
I_\mathcal{C}(z) =
\begin{cases}
0 & \text{if } z \in \mathcal{C}, \\
+\infty & \text{otherwise,}
\end{cases}
\end{equation}
which separates the cost and linear equalities from the inequality constraints.
For step-size parameters $\sigma,\rho>0$, the augmented Lagrangian of \eqref{eq:QP_OSQP_split} is
\begin{equation}\label{eq:QP_augmented_lagrangian}
\begin{aligned}
\mathcal L_{\sigma,\rho}
(\tilde x,\tilde z,x,z,\lambda,\mu)
={}&\tfrac{1}{2}\tilde x^\top Q\tilde x
+q^\top\tilde x\\
&+I_{A\tilde x=\tilde z}(\tilde x,\tilde z)
+I_{[\ell,u]}(z)\\
&+\lambda^\top(\tilde x-x)
+\tfrac{\sigma}{2}\|\tilde x-x\|_2^2\\
&+\mu^\top(\tilde z-z)
+\tfrac{\rho}{2}\|\tilde z-z\|_2^2,
\end{aligned}
\end{equation}
where $\lambda \in \R^n$ and $\mu \in \R^p$ are the associated Lagrange multipliers.
ADMM then solves \ref{eq:QP} by applying the following iterative algorithm to \eqref{eq:QP_augmented_lagrangian}:
\begin{subequations}\label{eq:QP_ADMM}
\begin{align}
(\tilde x^{k+1},\tilde z^{k+1})
&=\mathop{\arg\min}_{\tilde x,\tilde z}
\mathcal L_{\sigma,\rho}
(\tilde x,\tilde z,x^k,z^k,\lambda^k,\mu^k), \label{eq:QP_ADMM_step1}\\
(x^{k+1},z^{k+1})
&=\mathop{\arg\min}_{x,z}
\mathcal L_{\sigma,\rho}
(\tilde x^{k+1},\tilde z^{k+1},x,z,\lambda^k,\mu^k), \label{eq:QP_ADMM_step2}\\
\lambda^{k+1}
&=\lambda^k+\sigma(\tilde x^{k+1}-x^{k+1}),\\
\mu^{k+1}
&=\mu^k+\rho(\tilde z^{k+1}-z^{k+1}).
\end{align}
\end{subequations}

\noindent \textbf{$(\tilde x,\tilde z)$-update} \eqref{eq:QP_ADMM_step1}:
Let $\nu^{k+1}$ be the Lagrange multiplier for the constraint
$A\tilde x=\tilde z$ in the first step. Its optimality conditions are
\begin{subequations}\label{eq:QP_tilde_optimality}
\begin{align}
0={}&Q\tilde x^{k+1}+q+\lambda^k
+\sigma(\tilde x^{k+1}-x^k)+A^\top\nu^{k+1},\\
0={}&\mu^k+\rho(\tilde z^{k+1}-z^k)-\nu^{k+1},\\
0={}&A\tilde x^{k+1}-\tilde z^{k+1}.
\end{align}
\end{subequations}

\noindent \textbf{$(x,z)$-update} \eqref{eq:QP_ADMM_step2}:
The second step separates as
\begin{subequations}\label{eq:QP_copy_solution}
\begin{align}
x^{k+1}
&=\tilde x^{k+1}+\sigma^{-1}\lambda^k,
\label{eq:copy_x}\\
z^{k+1}
&=\mathop{\arg\min}_z\left\{
I_{[\ell,u]}(z)
+\tfrac{\rho}{2}
\|z-v^{k+1}\|_2^2
\right\} \notag\\
&=\Proj_{[\ell,u]}
(v^{k+1}), \label{eq:z_update}
\end{align}
\end{subequations}
where $v^{k+1}:=\tilde z^{k+1}+\rho^{-1}\mu^k$ and $\Proj_{[\ell,u]}(\cdot)$ is the elementwise projection operator
\begin{equation}
\Proj_{[\ell,u]}(z) =
\begin{cases}
z & \text{if } \ell \leq z \leq u, \\
\ell & \text{if } z < \ell, \\
u & \text{if } z > u.
\end{cases}
\end{equation}

\begin{table}[t]
\caption{Problem size of \ref{eq:QP_slack} with $m$ two-sided soft rows. Lifting to a standard \ref{eq:QP} grows the problem, whereas our approach retains the size of the hard QP for all penalties.}
\label{tab:lifting}
\centering
\renewcommand{\arraystretch}{1.2}
\begin{tabular}{lcccc}
\toprule
& \multicolumn{2}{c}{Variables} & \multicolumn{2}{c}{KKT order} \\
\cmidrule(lr){2-3}\cmidrule(lr){4-5}
Penalty & Ours & Lifted & Ours & Lifted \\
\midrule
Quadratic & $n$ & $n+m$ & $n+p$ & $n+p+m$ \\
$\ell_1$ & $n$ & $n+m$ & $n+p$ & $n+p+3m$ \\
Elastic net & $n$ & $n+m$ & $n+p$ & $n+p+3m$ \\
Huber & $n$ & $n+2m$ & $n+p$ & $n+p+4m$ \\
$\ell_\infty$ & $n$ & $n+1$ & $n+p$ & $n+p+m+2$ \\
$\ell_2$ & $n$ & $n+m+1$ & $n+p$ & \cellcolor{gray!15}second-order cone \\
\bottomrule
\end{tabular}
\end{table}

By setting $\lambda^0=0$, it follows that $\lambda^k=0$ and
$x^k=\tilde x^k$ for all $k\geq1$. Therefore, \eqref{eq:QP_ADMM} can be simplified to
\begin{subequations}\label{eq:QP_ADMM_solution}
\begin{equation}\label{eq:QP_ADMM_KKT}
\begin{bmatrix}
Q+\sigma I & A^\top\\
A & -\rho^{-1}I
\end{bmatrix}
\begin{bmatrix}
x^{k+1}\\ \nu^{k+1}
\end{bmatrix}
=
\begin{bmatrix}
\sigma x^k-q\\ z^k-\rho^{-1}\mu^k
\end{bmatrix},
\end{equation}
\begin{align}
\tilde z^{k+1}
&=Ax^{k+1}
=z^k+\rho^{-1}(\nu^{k+1}-\mu^k),\\
z^{k+1}
&=\Proj_{[\ell,u]}
\left(\tilde z^{k+1}+\rho^{-1}\mu^k\right), \label{eq:QP_ADMM_solution_proj}\\
\mu^{k+1}
&=\mu^k+\rho\left(\tilde z^{k+1}-z^{k+1}\right).
\end{align}
\end{subequations}

\begin{remark}[Standard Lifting]\label{rem:lifting}
The usual approach lifts \ref{eq:QP_slack} to a standard \ref{eq:QP} by introducing additional decision variables and constraint rows \cite{boyd2004_convex}. Each additional decision variable and constraint increases the KKT matrix in \eqref{eq:QP_ADMM_KKT}. The resulting problem sizes are summarized in Table~\ref{tab:lifting}, and the liftings of common penalties are detailed in Appendix~\ref{app:penalties}.
\end{remark}

\section{ADMM for \ref{eq:QP_slack}} \label{sec:slack_ADMM}

In this section, we show that the ADMM scheme outlined in Section~\ref{sec:QP_ADMM} can be adapted to solve \ref{eq:QP_slack} without introducing additional decision variables, by only modifying the projection step \eqref{eq:QP_ADMM_solution_proj}.

Introducing the auxiliary variable $z\in\R^p$, \ref{eq:QP_slack} is equivalent to
\begin{equation}\label{eq:QP_slack_with_z}
\begin{aligned}
\min_{x,z,\xi}\quad
&\tfrac{1}{2}x^\top Qx+q^\top x
+\Phi(\xi)+I_{[\ell,u]}(z+\xi)\\
\text{s.t.}\quad
&Ax=z.
\end{aligned}
\end{equation}
The slack variable $\xi$ only appears in the last two terms of the objective. Hence, we first eliminate $\xi$, then compute the resulting $z$-update. To this end, we assume throughout that $\Phi$ is unconditional.
\begin{definition}[Unconditional function]\label{def:unconditional}
A function $\Phi:\R^p\to\R\cup\{+\infty\}$ is unconditional if it is
invariant under arbitrary coordinatewise sign changes, i.e.,
\begin{equation}\label{eq:unconditional}
\Phi(D\xi)=\Phi(\xi)
\quad\!\!\text{for all}\quad\!\!
D=\operatorname{diag}(d),\; d\in\{-1,1\}^p.
\end{equation}
\end{definition}
\noindent This class covers a wide range of common penalty functions, including $\ell_k$-norms and separable penalties $\Phi(\xi)=\sum_i\phi_i(\xi_i)$ with even convex functions $\phi_i$ (see Appendix~\ref{app:penalties}).

\begin{lemma}[Slack elimination]\label{lem:slack_elimination}
Let $\Phi$ be proper, closed, convex, and unconditional, with
$\Phi(0)=0$.
Given $z\in\R^p$, define the box projection residual
\begin{equation}\label{eq:box_residual}
R(z):=z-\Proj_{[\ell,u]}(z),
\end{equation}
and the relaxed indicator
\begin{equation}\label{eq:slack_reduced_penalty}
g(z)
:=\inf_{\xi\in\R^p}
\bigl\{\Phi(\xi)+I_{[\ell,u]}(z+\xi)\bigr\}.
\end{equation}
Then
\begin{equation}\label{eq:slack_reduced_penalty_value}
g(z)=\Phi\!\left(-R(z)\right)=\Phi\!\left(R(z)\right).
\end{equation}
Whenever $g(z)<+\infty$, the infimum is attained at
\begin{equation}\label{eq:optimal_slack}
\xi^\star(z):=-R(z).
\end{equation}
\end{lemma}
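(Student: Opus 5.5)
Substituting $w=z+\xi$ rewrites \eqref{eq:slack_reduced_penalty} as $g(z)=\inf_{w\in[\ell,u]}\Phi(w-z)$. The aim is then to show that the minimizing $w$ is $\Proj_{[\ell,u]}(z)$, i.e.\ $\xi^\star=-R(z)$. The central tool will be a monotonicity property of unconditional convex functions: if $|\eta_i|\le|\zeta_i|$ for all $i$, then $\Phi(\eta)\le\Phi(\zeta)$. Once this is available, the lemma reduces to a coordinatewise comparison between $w-z$ and $\Proj_{[\ell,u]}(z)-z$.

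\textbf{Step 1 (monotonicity).} Fix $\zeta$ and a coordinate $i$. By Definition~\ref{def:unconditional}, flipping the sign of coordinate $i$ leaves $\Phi$ unchanged, so $\Phi(\zeta)=\Phi(D_i\zeta)$, where $D_i$ flips only coordinate $i$. Every vector that agrees with $\zeta$ off coordinate $i$ and has $i$-th entry $t\zeta_i$ with $t\in[-1,1]$ is the convex combination $\tfrac{1+t}{2}\zeta+\tfrac{1-t}{2}D_i\zeta$. Convexity then gives value at most $\max\{\Phi(\zeta),\Phi(D_i\zeta)\}=\Phi(\zeta)$. Applying this coordinate by coordinate shrinks $\zeta$ to any $\eta$ with $|\eta_i|\le|\zeta_i|$, with $\Phi$ nonincreasing along the way. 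Taking $D=-I$ in Definition~\ref{def:unconditional} also gives $\Phi(-R)=\Phi(R)$, which is the second equality in \eqref{eq:slack_reduced_penalty_value}.

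\textbf{Step 2 (coordinatewise comparison).} For any feasible $w\in[\ell,u]$ and each $i$, $|(\Proj_{[\ell,u]}(z))_i-z_i|=\operatorname{dist}(z_i,[\ell_i,u_i])\le|w_i-z_i|$. This holds because the box projection is separable and projects each coordinate onto its interval. Infinite bounds cause no trouble, since the interval is then a closed half-line or all of $\R$. Step~1 yields $\Phi(-R(z))\le\Phi(w-z)$ for every feasible $w$. Moreover $w=\Proj_{[\ell,u]}(z)$ is feasible, because $\ell\le u$ makes the box nonempty, so the infimum equals $\Phi(-R(z))$ and is attained at $\xi^\star=-R(z)$.

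\textbf{Obstacle: extended values.} The main care point is extended-valued $\Phi$ in Step~1. If $\Phi(\zeta)=+\infty$ the inequality is trivial. If $\Phi(\zeta)<\infty$, then $\Phi(D_i\zeta)<\infty$ as well, so the convexity inequality is between finite numbers. Hence $g(z)=\Phi(R(z))$ holds in $\R\cup\{+\infty\}$. When $g(z)=+\infty$ every feasible $\xi$ gives $+\infty$, so ``attained'' is vacuous; the statement only needs attainment when $g(z)<\infty$, which Step~2 provides. Closedness and $\Phi(0)=0$ are not needed for this argument. They only ensure $g$ is a proper relaxed indicator with $g=0$ on the box.
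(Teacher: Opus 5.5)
Your proof is correct and follows essentially the same route as the paper: the substitution $y=z+\xi$ giving $g(z)=\inf_{y\in[\ell,u]}\Phi(y-z)$, the monotonicity $|a|\le|b|\Rightarrow\Phi(a)\le\Phi(b)$ derived from convexity and unconditionality, and the coordinatewise bound $|\Proj_{[\ell,u]}(z)-z|\le|y-z|$, which yields attainment at $\xi^\star=-R(z)$. The only difference is cosmetic: you prove monotonicity one coordinate at a time via $\tfrac{1+t}{2}\zeta+\tfrac{1-t}{2}D_i\zeta$, whereas the paper writes $a$ in one step as a convex combination of sign-flipped copies of $b$ (the vertices of the box), and your version is, if anything, slightly tidier.
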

\begin{proof}
First, we observe that, by convexity and unconditionality,
\begin{equation}\label{eq:absolute_monotonicity}
|a|\leq |b| \quad\Longrightarrow\quad \Phi(a)\leq\Phi(b),
\end{equation}
where the inequality is componentwise, generalizing the classical equivalence of absolute and monotonic norms \cite[Thm.~2]{bauer1961_absolute}. Indeed, $|a|\leq|b|$ implies that $a$ is in the box $[-|b_1|,|b_1|]\times\dots\times[-|b_p|,|b_p|]$, which is the convex hull of its vertices, so $a=\sum_{i=1}^p\lambda_iD_ib$ for some $\lambda_i\geq 0$ satisfying $\sum_{i=1}^p\lambda_i=1$ and $D=\textrm{diag}(d_i)$ for some $d_i\in\{-1,1\}$.  Then, by convexity,
$$
\Phi(a)=
\Phi\bigg(\sum_{i=1}^p\lambda_iD_ib\bigg)
\leq
\sum_{i=1}^p\lambda_i\Phi(Db)
=
\Phi(Db),
$$
so the result \eqref{eq:absolute_monotonicity} follows from unconditionality.
Thus, \begin{equation}\label{eq:Phi_proj<=Phi_y}
\Phi\!\left(\Proj_{[\ell,u]}(z)-z\right)
\mathop{\leq}^{\eqref{eq:absolute_monotonicity}} \Phi(y-z)
\ \text{for any }y\in[\ell,u],
\end{equation}
since $|\Proj_{[\ell,u]}(z)-z|\leq |y-z|$ for any $y\in[\ell,u]$.

Moreover, applying the change of variable $y=z+\xi$,  we can rewrite $g$ as
\begin{align}\label{eq:slack_reduced_penalty:reformulated}
g(z)&\mathop{=}^{\eqref{eq:slack_reduced_penalty}}
\inf_{y\in\R^p}
\Phi(y-z)+I_{[\ell,u]}(y)
=\inf_{y\in[\ell,u]}\Phi(y-z).
\end{align}
Combining  \eqref{eq:Phi_proj<=Phi_y} and \eqref{eq:slack_reduced_penalty:reformulated}, we obtain the first equality in  \eqref{eq:slack_reduced_penalty_value}:
\begin{gather*}
g(z) \mathop{=}^{\eqref{eq:slack_reduced_penalty:reformulated}}\inf_{y\in[\ell,u]}\Phi(y-z) 
\leq
\Phi\big(\Proj_{[\ell,u]}(z)-z\big),
\\
\Phi\big(\Proj_{[\ell,u]}(z)-z\big)
\mathop{\leq}^{\eqref{eq:Phi_proj<=Phi_y}}
\inf_{y\in[\ell,u]}\Phi(y-z)
\mathop{=}^{\eqref{eq:slack_reduced_penalty:reformulated}}
g(z)
\end{gather*}
so $g(z)=\Phi(\Proj_{[\ell,u]}(z)-z) =\Phi(-R(z))$. The second follows similarly from unconditionality. If this value is finite, then
$y^\star=\Proj_{[\ell,u]}(z)$ is a minimizer and gives
\eqref{eq:optimal_slack}.
\end{proof}

Using Lemma~\ref{lem:slack_elimination}, we can eliminate $\xi$ from \eqref{eq:QP_slack_with_z}, so \ref{eq:QP_slack} is equivalent to
\begin{equation}\label{eq:QP_slack_reduced}
\begin{aligned}
\min_{x,z}\quad
&\tfrac{1}{2}x^\top Qx+q^\top x+g(z)\\
\text{s.t.}\quad
&Ax=z,
\end{aligned}
\end{equation}
and by introducing the same splitting as in Section~\ref{sec:QP_ADMM}, we obtain the problem
\begin{equation}\label{eq:QP_slack_OSQP_split}
\begin{aligned}
\min_{\tilde x,\tilde z,x,z}\quad
&\tfrac{1}{2}\tilde x^\top Q\tilde x
+q^\top\tilde x
+I_{A\tilde x=\tilde z}(\tilde x,\tilde z)
+g(z)\\
\text{s.t.}\quad
&(\tilde x,\tilde z)=(x,z).
\end{aligned}
\end{equation}
Comparing \eqref{eq:QP_slack_OSQP_split} with \eqref{eq:QP_OSQP_split}, the two problems are identical except that the indicator function $I_{[\ell,u]}(\cdot)$ has been replaced by the function $g(\cdot)$ from Lemma~\ref{lem:slack_elimination}. Hence, we can follow the same derivation as in Section~\ref{sec:QP_ADMM}, with the only difference that the $z$-update \eqref{eq:z_update} is replaced by
\begin{equation}\label{eq:QP_slack_z_subproblem}
z^{k+1}
=\mathop{\arg\min}_z\left\{
g(z)+\tfrac{\rho}{2}\|z-v^{k+1}\|_2^2\right\}.
\end{equation}
The following theorem reduces the $z$-update to the proximal operator of $\Phi$, which has a closed form for a wide range of penalty functions (see Appendix~\ref{app:penalties}).

\begin{table}[t]
\caption{ADMM iterations for \ref{eq:QP} and \ref{eq:QP_slack}.}
\label{tab:admm_comparison}
\centering
\renewcommand{\arraystretch}{1.3}
\begin{tabular}{lcc}
\toprule
& ADMM for \ref{eq:QP} & ADMM for \ref{eq:QP_slack} \\
\midrule
$x$-update & solve \eqref{eq:QP_ADMM_KKT} & solve \eqref{eq:QP_ADMM_KKT} \\
\rowcolor{gray!15}
$z$-update & $\Proj_{[\ell,u]}(Ax+\mu/\rho)$ & $\Proj^{\Phi}_{[\ell,u]}(Ax+\mu/\rho)$ \\
dual update & $\mu+\rho(Ax-z)$ & $\mu+\rho(Ax-z)$ \\
\bottomrule
\end{tabular}
\end{table}

\begin{theorem}[Generalized slack projection]\label{thm:slack_prox}
Under the assumptions of Lemma~\ref{lem:slack_elimination}, let $g$ be the relaxed indicator function
\begin{equation*}
g(z)
:=\inf_{\xi\in\R^p}
\bigl\{\Phi(\xi)+I_{[\ell,u]}(z+\xi)\bigr\}.
\end{equation*}
For $\rho>0$ and
$v\in\R^p$, let
\begin{equation}
\bar v=\Proj_{[\ell,u]}(v),
\qquad r=R(v)=v-\bar v.
\end{equation}
Then
\begin{equation}\label{eq:slack_prox_solution}
\mathop{\arg\min}_{z\in \R^p}\left\{g(z)+\tfrac{\rho}{2}\|z-v\|_2^2\right\}
=\bar v+\prox_{\Phi/\rho}(r),
\end{equation}
where the proximal operator of $\Phi$ is given by
\begin{equation}\label{eq:prox}
\prox_{\Phi/\rho}(r)
:=\mathop{\arg\min}_s\left\{\Phi(s)+\tfrac{\rho}{2}\|s-r\|_2^2\right\}.
\end{equation}
\end{theorem}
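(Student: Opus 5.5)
We plan to use Lemma~\ref{lem:slack_elimination} to rewrite the prox of $g$ as a joint minimization. Since $g(z)=\inf_{y\in[\ell,u]}\Phi(y-z)$, we have
\begin{equation*}
\min_{z}\Bigl\{g(z)+\tfrac{\rho}{2}\|z-v\|_2^2\Bigr\}
=\min_{y\in[\ell,u],\,z}\Bigl\{\Phi(y-z)+\tfrac{\rho}{2}\|z-v\|_2^2\Bigr\}.
\end{equation*}
Substituting $s=z-y$, so that $z=y+s$, and using $\Phi(-s)=\Phi(s)$ from unconditionality, this becomes
\begin{equation*}
\min_{y\in[\ell,u],\,s}\Bigl\{\Phi(s)+\tfrac{\rho}{2}\|y+s-v\|_2^2\Bigr\}.
\end{equation*}
The problem is now jointly convex in $(y,s)$. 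Moreover, $g$ is convex, being a partial minimization of a jointly convex function, so $g+\tfrac{\rho}{2}\|\cdot-v\|^2$ is strongly convex. Closedness of $g$ follows from its explicit form $\Phi(R(z))$, with $R$ continuous and $\Phi$ closed. Hence the argmin is a single point, and it suffices to exhibit one minimizer $(y^\star,s^\star)$ and set $z^\star=y^\star+s^\star$.

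The candidate is $y^\star=\bar v$ and $s^\star=s_r:=\prox_{\Phi/\rho}(r)$. First I would show a sign/magnitude property of the prox: for an unconditional $\Phi$, $s_r$ satisfies $\operatorname{sign}(s_{r,i})\in\{0,\operatorname{sign}(r_i)\}$ and $|s_{r,i}|\le|r_i|$ componentwise. The sign part follows by flipping coordinates of $s$ that have the wrong sign: this leaves $\Phi$ unchanged by unconditionality and does not increase $\|s-r\|$, so uniqueness of the prox forces the flipped point to equal $s_r$. For the magnitude part, clip $s_i$ to $r_i$ when $|s_i|>|r_i|$. This decreases $\Phi$ by the monotonicity \eqref{eq:absolute_monotonicity} and strictly decreases the quadratic term, so again uniqueness gives the bound.

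Next I verify optimality of $(\bar v,s_r)$ through the KKT conditions. Optimality in $s$ for fixed $y=\bar v$ is exactly the definition of $s_r$, since $\bar v+s-v=s-r$. Optimality in $y$ over the box, with $s=s_r$ fixed, requires $y^\star=\Proj_{[\ell,u]}(v-s_r)$. This is where the sign/magnitude property enters. If $v_i>u_i$, then $r_i=v_i-u_i>0$ and $0\le s_{r,i}\le r_i$, so $v_i-s_{r,i}\ge u_i$ and the projection gives $u_i=\bar v_i$. The case $v_i<\ell_i$ is symmetric. If $v_i\in[\ell_i,u_i]$, then $r_i=0$, so $s_{r,i}=0$ and the projection returns $v_i=\bar v_i$. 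Because the objective is jointly convex, the joint optimality condition is the blockwise condition $0\in\partial\Phi(s)+\rho(y+s-v)$ together with $-\rho(y+s-v)\in N_{[\ell,u]}(y)$, since the two blocks share the same smooth coupling term. Blockwise optimality therefore implies joint optimality, and hence $z^\star=\bar v+s_r$.

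I expect the main obstacle to be the sign/magnitude lemma for the prox. That argument needs uniqueness of the prox minimizer, which holds by strong convexity. It also needs care with coordinates where $r_i=0$ and with extended-valued $\Phi$; there, finiteness is preserved because sign flips and clipping keep $\Phi$ finite by \eqref{eq:absolute_monotonicity}.
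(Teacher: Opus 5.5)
Your proof is correct, but it certifies optimality differently from the paper.

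\textbf{Shared step.} Both arguments first show that $s^\star=\prox_{\Phi/\rho}(r)$ lies in the box $B_r$ between $0$ and $r$. The paper does this in one step by projecting onto $B_r$ and invoking \eqref{eq:absolute_monotonicity}; your flip-then-clip argument is equivalent.

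\textbf{The paper's route.} It stays in the reduced variable $z$. Using the closed form $g=\Phi\circ R$ from Lemma~\ref{lem:slack_elimination} and the componentwise nonexpansiveness of $R$, it proves, for every $z$, the lower bound $g(z)+\tfrac{\rho}{2}\|z-v\|_2^2\geq\Phi(R(z))+\tfrac{\rho}{2}\|R(z)-r\|_2^2\geq\Phi(s^\star)+\tfrac{\rho}{2}\|s^\star-r\|_2^2$. Equality holds at $z^\star=\bar v+s^\star$ because $R(z^\star)=s^\star$.

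\textbf{Your route.} You treat $g$ as an infimal convolution and re-lift the slack. You then show that $(\bar v,s^\star)$ jointly minimizes $F(y,s)=I_{[\ell,u]}(y)+\Phi(s)+\tfrac{\rho}{2}\|y+s-v\|_2^2$ via blockwise optimality. The step ``blockwise implies joint'' is valid because the nonsmooth part is block-separable and the coupling is smooth. It is cleanest to justify it by adding the subgradient inequalities. Use $-\rho(s^\star-r)\in N_{[\ell,u]}(\bar v)$ and $-\rho(s^\star-r)\in\partial\Phi(s^\star)$, together with the gradient inequality for the quadratic, whose gradient is $\rho(s^\star-r)$ in each block. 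The linear terms cancel and give $F(y',s')\geq F(\bar v,s^\star)$, with no constraint qualification needed.

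\textbf{What each buys.} The paper's argument is an elementary, subdifferential-free inequality chain that reuses the full conclusion of Lemma~\ref{lem:slack_elimination}. Yours needs neither the closed form of $g$ nor the Lipschitz property of $R$. It also isolates the only place where the box geometry enters beyond $s^\star\in B_r$, namely $\Proj_{[\ell,u]}(v-s^\star)=\bar v$, which makes clear what an extension beyond boxes would require.

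Your remark on the closedness of $g$ is unnecessary: you exhibit the minimizer explicitly and use strong convexity only for uniqueness.
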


\begin{proof}
Let
\begin{equation}
s^\star:=\prox_{\Phi/\rho}(r)
\end{equation}
and define the box between $0$ and $r$ by
\begin{equation}
B_r:=\prod_{i=1}^p
[\min\{0,r_i\},\max\{0,r_i\}].
\end{equation}
We first observe that \(s^\star\in B_r\). Indeed, for any $s\in\R^p$, let $\bar s=\Proj_{B_r}(s)$.  Coordinatewise
projection gives
\begin{equation}
|\bar s|\leq |s|,
\qquad
\|\bar s-r\|_2\leq\|s-r\|_2,
\end{equation}
with strict inequality in the second relation when $s\notin B_r$.
The objective in \eqref{eq:prox} is finite at $s=0$, so it is finite at
$s^\star$.  If $s^\star\notin B_r$, then
letting $\bar s^\star=\Proj_{B_r}(s^\star)$,
\eqref{eq:absolute_monotonicity} gives
$\Phi(\bar s^\star)\leq\Phi(s^\star)<+\infty$, while the quadratic term
decreases strictly.  This contradicts the optimality of $s^\star$.
Hence
\begin{equation}\label{eq:prox_between_zero_r}
s^\star\in B_r.
\end{equation}

Set $z^\star=\bar v+s^\star$.  For each $i$,
\eqref{eq:prox_between_zero_r} implies: if
$v_i\in[\ell_i,u_i]$, then $\bar v_i=v_i$, $r_i=0$, and $s_i^\star=0$;
if $v_i>u_i$, then $\bar v_i=u_i$, $r_i>0$, and
$s_i^\star\in[0,r_i]$, so $z_i^\star=u_i+s_i^\star\geq u_i$; and if
$v_i<\ell_i$, then $\bar v_i=\ell_i$, $r_i<0$, and
$s_i^\star\in[r_i,0]$, so $z_i^\star=\ell_i+s_i^\star\leq\ell_i$. Therefore,
\begin{equation}\label{eq:candidate_residual}
\Proj_{[\ell,u]}(z^\star)=\bar v,\quad
R(z^\star)=s^\star,\quad
z^\star-v=s^\star-r.
\end{equation}
Since $R(\cdot)$ is componentwise $1$-Lipschitz, we have
\begin{equation}\label{eq:residual_nonexpansive}
\|R(z)-r\|_2 = \|R(z)-R(v)\|_2 \leq \|z-v\|_2,
\end{equation}
and consequently, for every $z$,
\begin{align}
g(z)+\tfrac{\rho}{2}\|z-v\|_2^2
&\geq
\Phi(R(z))+\tfrac{\rho}{2}\|R(z)-r\|_2^2\\
&\geq
\Phi(s^\star)+\tfrac{\rho}{2}\|s^\star-r\|_2^2.
\end{align}
The first inequality follows from Lemma~\ref{lem:slack_elimination} and \eqref{eq:residual_nonexpansive}, the second inequality follows from the definition of $s^\star$, and \eqref{eq:candidate_residual} shows that equality holds at $z=z^\star$.
The function
$(z,\xi)\mapsto\Phi(\xi)+I_{[\ell,u]}(z+\xi)$ is jointly convex, so
partial minimization over $\xi$ shows that $g$ is convex.
Since $g$ is convex, the proximal objective is strongly convex.  Thus its
unique minimizer is $z^\star=\bar v+\prox_{\Phi/\rho}(r)$.
\end{proof}
Motivated by Theorem~\ref{thm:slack_prox}, we define the \emph{generalized slack projection}
\begin{equation}\label{eq:slack_projection}
\Proj^{\Phi}_{[\ell,u]}(v)
:=\Proj_{[\ell,u]}(v)+\prox_{\Phi/\rho}\bigl(R(v)\bigr),
\end{equation}
where $R(\cdot)$ is the box projection residual \eqref{eq:box_residual}.
Hence, the ADMM iterations for \ref{eq:QP_slack} are identical to \eqref{eq:QP_ADMM_solution}, except that the projection step \eqref{eq:QP_ADMM_solution_proj} is replaced by
\begin{equation}\label{eq:QP_slack_z_update}
z^{k+1}=\Proj^{\Phi}_{[\ell,u]}(v^{k+1}),
\end{equation}
with $v^{k+1}=\tilde z^{k+1}+\rho^{-1}\mu^k$. Table~\ref{tab:admm_comparison} summarizes both schemes side by side. In particular, the KKT matrix is unchanged, so its factorization can be reused, and the additional cost per iteration is a single evaluation of $\prox_{\Phi/\rho}$.
Moreover, $g=\Phi\circ R$ is proper ($g=0$ on $[\ell,u]$), closed ($\Phi$ closed, $R$ continuous), and convex (proof of Theorem~\ref{thm:slack_prox}). Hence, \eqref{eq:QP_slack_OSQP_split} has the same two-block structure as \eqref{eq:QP_OSQP_split}, and the standard convergence guarantees of ADMM \cite{boyd2011_admm, stellato2020_osqp} carry over unchanged to the modified iteration.
By Lemma~\ref{lem:slack_elimination}, the eliminated slack associated with $z^{k+1}$ is
\begin{equation}\label{eq:slack_recovery}
\xi^{k+1}=-R(z^{k+1})=-\prox_{\Phi/\rho}\bigl(R(v^{k+1})\bigr).
\end{equation}

\begin{remark}[Relation to known projections]\label{rem:known_projections}
The generalized slack projection contains three known special cases. For the hard-constrained case $\Phi=I_{\{0\}}$, we have $\prox_{\Phi/\rho}=0$, and \eqref{eq:slack_projection} reduces to the standard projection $\Proj_{[\ell,u]}$. For the quadratic penalty $\Phi(\xi)=\tfrac{\alpha}{2}\|\xi\|_2^2$, we have $\prox_{\Phi/\rho}(r)=\tfrac{\rho}{\rho+\alpha}r$, and \eqref{eq:slack_projection} recovers the smoothed projection of \cite{lew2026_slackqp}. For $\Phi(\xi)=\alpha\|\xi\|_2$, \eqref{eq:slack_projection} recovers the known proximal operator of the distance function $\alpha\operatorname{dist}_{[\ell,u]}$ \cite{parikh2014_prox, beck2017_first_order}. Closed-form proximal operators of the other penalties in Table~\ref{tab:lifting} are given in Appendix~\ref{app:penalties}.
\end{remark}

\begin{figure*}[t]
\centering
\includegraphics[width=\textwidth]{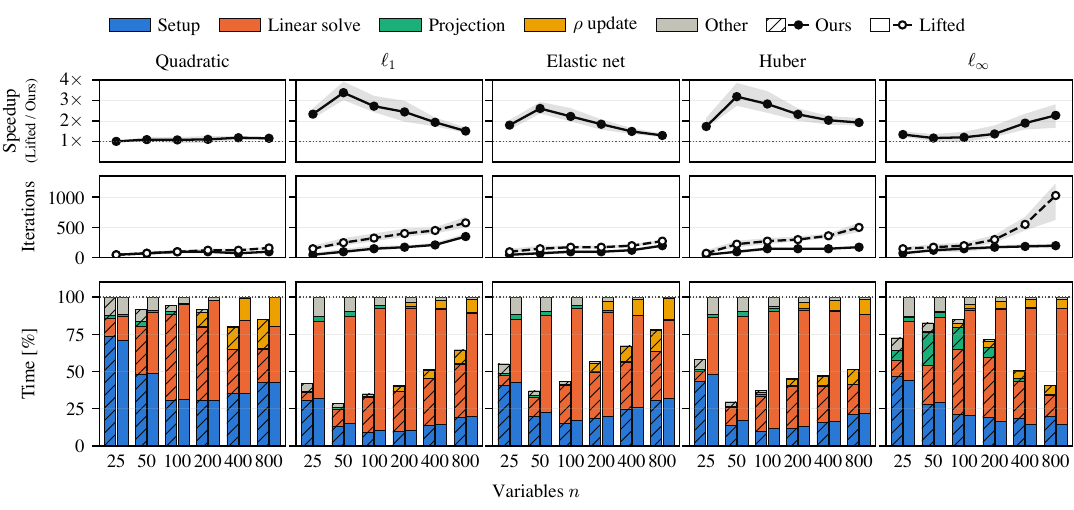}
\caption{Our approach against explicit lifting on random sparse QPs with $p=n$ constraints, all of them soft, for each penalty. Top: speedup, i.e., lifted over our setup-plus-solve time. Middle: ADMM iterations. Bottom: runtime breakdown, normalized to the lifted runtime (hatched: ours). Lines and bars are medians over 100 instances; shaded bands are interquartile ranges.}
\label{fig:scaling}
\end{figure*}
\section{Numerical Results}

We evaluate our approach in two parts. First, we benchmark our approach against lifting \ref{eq:QP_slack} to a standard \ref{eq:QP} as described in Table~\ref{tab:lifting} on randomly generated sparse QPs with different penalty functions (Section~\ref{sec:random_qps}). Second, we apply our approach to a building energy management problem where we mix different penalty functions to model different soft-constraint behaviors (Section~\ref{sec:building}).

Building on the popular open-source solver \texttt{OSQP} \cite{stellato2020_osqp}, we add support for the penalty functions outlined in Appendix~\ref{app:penalties}. All experiments have been implemented in \texttt{Python}, which interfaces with the native \texttt{C} code of the modified \texttt{OSQP} implementation. Everything was evaluated on a MacBook Pro with an M5 Pro processor and 48 GB of RAM. All detailed parameters used in the examples can be found in the code repository.

\subsection{Random Sparse QPs} \label{sec:random_qps}

We generate random sparse QP instances with $n$ decision variables and $p=n$ constraints. The quadratic cost is constructed as $Q=L^\top L + 0.1I_n$, where the regularization $0.1I_n$ ensures positive definiteness, and $L$ is a sparse matrix with Gaussian-sampled entries, such that $Q$ has a density of $8\%$. The constraint matrix $A$ is a sparse matrix with Gaussian-sampled entries of density $5\%$, where each row has at least one entry and all rows are normalized to unit norm. For each problem, we randomly pick an anchor point $x_a \in \R^n$ and target point $x_t \in \R^n$. The linear cost is then constructed as $q=-Qx_t$, ensuring that $x_t$ is the unconstrained minimizer of the nominal cost. For each constraint row $a_i$ of $A$, let $\beta_i=a_i^\top (x_t-x_a)$ be the band distance. We then construct the bounds as $\ell_i=a_i^\top x_a-0.5|\beta_i|$ and $u_i=a_i^\top x_a+0.5|\beta_i|$, centering the constraint around the anchor point, ensuring that the QP is always feasible. Since the target point is outside the constraint band, we also ensure that the optimal solution always has active constraints.

For each problem size $n$ and penalty function listed in Table~\ref{tab:scaling_times}, we generate $100$ random instances. We then take each problem instance and lift it by introducing additional decision variables to form a standard \ref{eq:QP} as summarized in Table~\ref{tab:lifting} and Appendix~\ref{app:penalties}. Note that we are not considering $\ell_2$-norm penalties as they would require a second-order cone solver. While our implementation supports $\ell_2$-norm penalties, the original \texttt{OSQP} solver does not support second-order cone constraints. We solve both the original problem with our approach as well as the lifted problem using the same solver settings with an absolute and relative tolerance of $\epsilon_\text{abs}=\epsilon_\text{rel}=10^{-6}$ and adaptive $\rho$ with a starting value $\rho_0=0.1$.

The results are shown in Fig.~\ref{fig:scaling} and Table~\ref{tab:scaling_times}. The proposed ADMM approach outperforms all lifted formulations by a factor of up to $3.4\times$. The speedup comes mainly from two factors: 1) the size of the KKT system is not increased, leading to faster linear system solves, and 2) the number of iterations is lower for all penalty functions except the quadratic one, while the evaluation time of $\prox_{\Phi/\rho}$ is negligible. The two formulations induce different ADMM splittings and conditioning, resulting in substantially different iteration counts for several penalties. Characterizing this difference theoretically is left for future work.

For the $\ell_1$, elastic net, and Huber penalties, the speedup peaks at medium-sized problems. For small problems, the setup time, which is almost identical for both formulations, dominates and dilutes the per-iteration savings. For large problems, the nonzeros of $Q$ and $A$ grow quadratically in $n$, while lifting only adds sparse rows and columns, so the factorization cost is dominated by the original problem rather than the lifting.
For the $\ell_\infty$ penalty, lifting adds only a single variable, so the per-iteration cost is similar for both formulations, but the lifted iteration count grows rapidly with $n$, increasing the speedup.

\begin{table}[t]
\caption{Median setup-plus-solve time of our approach and of explicit lifting on the random QPs of Fig.~\ref{fig:scaling}, over 100 instances per entry.}
\label{tab:scaling_times}
\centering
\setlength{\tabcolsep}{4pt}
\begin{tabular}{lr@{\,/\,}lr@{\,/\,}lr@{\,/\,}lr@{\,/\,}lr@{\,/\,}l}
\toprule
& \multicolumn{10}{c}{Ours / Lifted Runtime [ms]} \\
\cmidrule(lr){2-11}
Penalty & \multicolumn{2}{c}{$n=25$} & \multicolumn{2}{c}{$n=50$} & \multicolumn{2}{c}{$n=100$} & \multicolumn{2}{c}{$n=400$} & \multicolumn{2}{c}{$n=800$} \\
\midrule
Quadratic & 0.080 & 0.081 & 0.15 & 0.17 & 0.71 & 0.73 & 19 & 23 & 132 & 154 \\
$\ell_1$ & 0.082 & 0.19 & 0.18 & 0.63 & 0.96 & 2.7 & 31 & 61 & 226 & 346 \\
Elastic net & 0.081 & 0.14 & 0.16 & 0.43 & 0.68 & 1.6 & 23 & 34 & 169 & 217 \\
Huber & 0.079 & 0.14 & 0.18 & 0.61 & 0.89 & 2.5 & 25 & 53 & 161 & 310 \\
$\ell_\infty$ & 0.099 & 0.14 & 0.28 & 0.34 & 1.1 & 1.4 & 35 & 65 & 212 & 500 \\
\bottomrule
\end{tabular}

\end{table}

\subsection{Building Energy Management with Mixed Penalties} \label{sec:building}

\begin{figure}[t]
\centering
\includegraphics[width=\columnwidth]{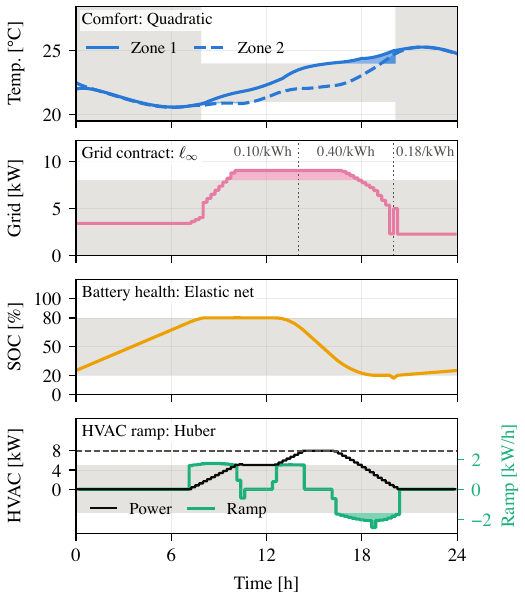}
\caption{One-day building energy management schedule with a different penalty for each soft constraint group. Each panel shows a soft-constrained quantity, its band (gray), and its slack (tinted): zone temperatures against the comfort band (quadratic), grid import against the contracted limit ($\ell_\infty$ over the day; dotted lines separate the tariff periods, labeled with the price per kWh), battery state of charge against its health band (elastic net), and the HVAC ramp rate against its limit (Huber, right axis), together with the HVAC power and its capacity (dashed).}
\label{fig:building}
\vspace{-5mm}
\end{figure}

Realistic optimization problems interacting with the real world involve multiple soft constraints encoding different intents. In this section, we consider a case study of a building energy management optimization problem. We consider a building with two thermal zones which are coupled through a door. Zone 1 is exposed to the outside on a warm summer day, while Zone 2 has an HVAC system which can provide cooling. Additionally, the building has a base load $L_k$ of 4\,kW during occupancy (8--20\,h) and 2\,kW otherwise, a battery with a capacity of 20\,kWh, and a grid connection with dynamic energy prices and a grid contract limit of 8\,kW.

We solve an MPC problem over a single day with $N=96$ steps and a time discretization of $\Delta t=15$\,min, minimizing the electricity cost:
\begin{subequations}\label{eq:building}
\begin{align}
\min_{T,b,h,d,g,\xi}\;
&\sum_{k=0}^{N-1}\bigl(\Delta t\,\pi_k g_k+\varepsilon\|(h_k,d_k,g_k)\|_2^2\bigr)+\sum_{j}\Phi_j(\xi^j) \label{eq:building_cost}\\
\text{s.t.}\quad
&T_{k+1}=A_T T_k+B_T h_k+w_k, \label{eq:building_thermal}\\
&b_{k+1}=b_k-\Delta t\,d_k, \label{eq:building_battery}\\
&g_k+d_k=h_k+L_k, \label{eq:building_balance}\\
&(b_k,h_k,d_k,g_k)\in\mathcal{X}, \label{eq:building_hard}\\
&\underline{T}_k\leq T_k+\xi^c_k\leq\overline{T}_k, \label{eq:building_comfort}\\
&g_k+\xi^g_k\leq\bar g, \label{eq:building_grid}\\
&\underline{b}\leq b_k+\xi^b_k\leq\overline{b}, \label{eq:building_soc}\\
&-\bar r\leq(h_k-h_{k-1})/\Delta t+\xi^r_k\leq\bar r, \label{eq:building_ramp}
\end{align}
\end{subequations}
where $T_k\in\R^2$ are the zone temperatures, $b_k$ is the battery charge, $h_k$ is the HVAC power, $d_k$ is the battery discharge power, $g_k$ is the grid import, $\pi_k$ is the time-of-use price, and $\varepsilon=10^{-3}$ is a small regularization weight. The thermal dynamics \eqref{eq:building_thermal} follow from a zero-order-hold discretization of a two-zone thermal model, where $w_k$ collects the outside temperature and internal heat gains. \eqref{eq:building_balance} ensures that electrical power is balanced, and the hard constraints \eqref{eq:building_hard} encode physical limits such as maximum HVAC power, and battery power and capacity. The soft constraints \eqref{eq:building_comfort}--\eqref{eq:building_ramp} are penalized by $\Phi_j$, $j\in\{c,g,b,r\}$, as follows:

\noindent \textbf{Comfort (quadratic)}: $\Phi_c(\xi^c)=0.25\|\xi^c\|_2^2$\\
Zone temperatures should remain within $[21,24]\,^\circ$C during occupancy (8--20\,h), and $[11,34]\,^\circ$C otherwise. Small violations are cheap while large violations are expensive.

\noindent \textbf{Grid contract ($\boldsymbol{\ell_\infty}$)}: $\Phi_g(\xi^g)=0.9\|\xi^g\|_\infty$\\
Violating the grid contract of $\bar g=8$\,kW incurs a peak-demand charge, i.e., only the maximum violation over the day is penalized.

\noindent \textbf{Battery health (elastic net)}: $\Phi_b(\xi^b)=0.05\|\xi^b\|_1+\tfrac{1}{12}\|\xi^b\|_2^2$\\
To reduce battery degradation, the state of charge should be in $[20,80]\,\%$. This should hold exactly unless violating it is necessary, and then only slightly.

\noindent \textbf{HVAC ramp (Huber)}: $\Phi_r(\xi^r)=\sum_k\phi(\xi^r_k)$\\
To reduce wear, there is an HVAC power rate limit of $\bar\omega=1.6$\,kW/h. We tolerate small violations while limiting the cost of large ones with the Huber penalty $\phi$ (Appendix~\ref{app:huber}) with $\alpha=0.125$ and $\delta=0.5$.

The resulting schedule can be seen in Fig.~\ref{fig:building}. One can see the trade-offs of the different penalties. The comfort violation is distributed over many timesteps. A peak-demand charge is only paid once; hence, the grid import stays high over multiple timesteps. Finally, the elastic net penalty imposes sparse constraint violations, conserving the battery as much as possible.
The problem is solved to an accuracy of $10^{-6}$ in 5.1\,ms. For comparison, we solve it with the standard approach of lifting the problem to a \ref{eq:QP}, which takes 14.5\,ms, i.e., $2.83\times$ longer. This comparison shows the benefits of having a solver which natively supports a wide range of penalty functions for soft constraints.

\printbibliography

\appendices

\section{Common Slack Penalties} \label{app:penalties}

For common slack penalties, we list the lifting to a standard \ref{eq:QP} \cite{boyd2004_convex}, which yields the problem sizes in Table~\ref{tab:lifting}, and the proximal operator \cite[Chapter~6]{parikh2014_prox}, which defines the generalized slack projection \eqref{eq:slack_projection}. All penalties are unconditional, so Lemma~\ref{lem:slack_elimination} and Theorem~\ref{thm:slack_prox} apply. For separable penalties $\Phi(\xi)=\sum_i\phi(\xi_i)$ with even $\phi$, the proximal operator applies componentwise.

For the liftings, let $a_i^\top$ denote the $i$-th row of $A$ and $m$ the number of soft constraints. Each additional decision variable and constraint increases the order of the KKT matrix in \eqref{eq:QP_ADMM_KKT} by one. Further, let
\begin{equation}\label{eq:soft_thresholding}
\mathcal S_\tau(r)
:=\operatorname{sign}(r)(|r|-\tau)_+
\end{equation}
be the soft thresholding operator.

\subsection{Quadratic Penalty}
For $\alpha>0$,
\begin{subequations}
\begin{align}
\phi(s)&=\tfrac{\alpha}{2}s^2,\\
\prox_{\phi/\rho}(r)&=\frac{\rho}{\rho+\alpha}r.
\end{align}
\end{subequations}
\textit{Lifting}: A decision variable $\xi_i$ with cost $\tfrac{\alpha}{2}\xi_i^2$ relaxes the soft constraint to $\ell_i\leq a_i^\top x+\xi_i\leq u_i$, adding one decision variable and no constraints.

\subsection{\texorpdfstring{$\ell_1$}{l1} Penalty}
For $\alpha>0$,
\begin{subequations}
\begin{align}
\phi(s)&=\alpha|s|,\\
\prox_{\phi/\rho}(r)&=\mathcal S_{\alpha/\rho}(r).
\end{align}
\end{subequations}
\textit{Lifting}: A decision variable $s_i\geq0$ with cost $\alpha s_i$ relaxes the soft constraint to $\ell_i-s_i\leq a_i^\top x\leq u_i+s_i$. Since $s_i$ enters both bounds with opposite signs, the constraint is split into two one-sided constraints, and $s_i\geq0$ adds a third constraint, i.e., one additional variable and two additional constraints.

\subsection{Elastic Net Penalty}
For $\alpha_1,\alpha_2>0$,
\begin{subequations}
\begin{align}
\phi(s)&=\alpha_1|s|+\tfrac{\alpha_2}{2}s^2,\\
\prox_{\phi/\rho}(r)
&=\frac{\rho}{\rho+\alpha_2}
\mathcal S_{\alpha_1/\rho}(r).
\end{align}
\end{subequations}
\textit{Lifting}: As for the $\ell_1$ penalty, but with cost $\alpha_1 s_i+\tfrac{\alpha_2}{2}s_i^2$.

\subsection{Huber Penalty} \label{app:huber}
For $\alpha,\delta>0$,
\begin{subequations}
\begin{equation}
\phi(s)=
\begin{cases}
\tfrac{\alpha}{2}s^2, & |s|\leq\delta,\\
\alpha\bigl(\delta|s|-\tfrac{1}{2}\delta^2\bigr), & |s|>\delta,
\end{cases}
\end{equation}
and
\begin{equation}
\prox_{\phi/\rho}(r)=
\begin{cases}
\dfrac{\rho}{\rho+\alpha}r,
& |r|\leq(1+\alpha/\rho)\delta,\\[2mm]
r-\dfrac{\alpha\delta}{\rho}\operatorname{sign}(r),
& |r|>(1+\alpha/\rho)\delta.
\end{cases}
\end{equation}
\end{subequations}
\textit{Lifting}: The Huber penalty can be written as
\begin{equation}
\phi(\xi_i)=\min_{w_i,\,t_i\geq0}
\bigl\{\tfrac{\alpha}{2}w_i^2+\alpha\delta t_i
: |\xi_i|\leq w_i+t_i\bigr\}.
\end{equation}
Hence, two decision variables $w_i$ and $t_i\geq0$ relax the soft constraint to $\ell_i-w_i-t_i\leq a_i^\top x\leq u_i+w_i+t_i$, which is split into two one-sided constraints as for the $\ell_1$ penalty. Together with $t_i\geq0$, this adds two variables and two constraints.

\subsection{Infinity-Norm Penalty}
For $\alpha>0$, let
\begin{subequations}
\begin{equation}
\Phi(\xi)=\alpha\|\xi\|_\infty.
\end{equation}
By Moreau decomposition,
\begin{equation}\label{eq:inf_norm_moreau}
\prox_{\Phi/\rho}(r)
=r-\Proj_{\{\|\cdot\|_1\leq\alpha/\rho\}}(r),
\end{equation}
so $\prox_{\Phi/\rho}(r)=0$ if $\|r\|_1\leq\alpha/\rho$, and otherwise
\begin{equation}
[\prox_{\Phi/\rho}(r)]_i
=\operatorname{sign}(r_i)\min\{|r_i|,\kappa\},
\end{equation}
where $\kappa>0$ is the unique solution of
\begin{equation}\label{eq:inf_norm_threshold}
\sum_{i=1}^p\bigl(|r_i|-\kappa\bigr)_+=\frac{\alpha}{\rho}.
\end{equation}
\end{subequations}
The left-hand side of \eqref{eq:inf_norm_threshold} is piecewise linear
and strictly decreasing on $[0,\|r\|_\infty]$, so $\kappa$ can be computed
by bisection
\cite[\S6.5.2]{parikh2014_prox}, by sorting $|r|$ in $O(p\log p)$
operations, or in expected $O(p)$ operations by median-based selection
\cite{duchi2008_l1ball}.

\textit{Lifting}: A single decision variable $s\geq0$ with cost $\alpha s$, shared by all soft constraints, relaxes each soft constraint to $\ell_i-s\leq a_i^\top x\leq u_i+s$. Each soft constraint is split into two one-sided constraints, and $s\geq0$ adds one constraint, i.e., one additional variable and $m+1$ additional constraints.

\subsection{Euclidean-Norm Penalty}
For $\alpha>0$, let
\begin{subequations}
\begin{equation}
\Phi(\xi)=\alpha\|\xi\|_2.
\end{equation}
\begin{equation}
\prox_{\Phi/\rho}(r)=
\begin{cases}
\left(1-\dfrac{\alpha}{\rho\|r\|_2}\right)r,
& \|r\|_2>\dfrac{\alpha}{\rho},\\[2mm]
0, & \|r\|_2\leq\dfrac{\alpha}{\rho}.
\end{cases}
\end{equation}
\end{subequations}
\textit{Lifting}: Decision variables $\xi_i$ relax each soft constraint to $\ell_i\leq a_i^\top x+\xi_i\leq u_i$, and an epigraph variable $t$ with cost $\alpha t$ satisfies $\|\xi\|_2\leq t$. This adds $m+1$ variables, but the second-order cone constraint cannot be expressed in a standard \ref{eq:QP}.

\end{document}